\theoremstyle{definition}
\newtheorem{theorem}{Theorem}[section]
\newtheorem{proposition}[theorem]{Proposition}
\newtheorem{corollary}[theorem]{Corollary}
\newcommand{\IR}{\hbox{$\mathbb{R}$}}
\newcommand{\IZ}{\hbox{$\mathbb{Z}$}}
\newcommand{\abs}[1]{\hbox{$\left| {#1} \right|$}}
\def\it{\itshape}
\def\tt{\texttt}
\def\bf{\textbf}
\def\IR{\mathbb{R}}
\def\I1{\mathbb{1}}
\def\limx0{\lim_{x \to 0}}
\def\intxyleq1{\underset{\| x - y  \| \leq 1}{\int}}
\def\intxygeq1{\underset{\| x - y  \| \geq 1}{\int}}
\def\intxizetaleq1{\underset{\| \xi - \zeta  \| \leq 1}{\int}}
\def\intxizetageq1{\underset{\| \xi - \zeta \| \geq 1}{\int}}
\def\tab{\hskip 1mm}
\def\tab{\hspace{.1pc}}
\def\ttab{\hspace{1pc}}
\newcounter{hours}
\newcounter{minutes}
\newcommand\printtime{%
  \setcounter{hours}{\the\time/60}%
  \setcounter{minutes}{\the\time-\value{hours}*60}%
  \ifthenelse{\value{hours} > 12}
     {
       \setcounter{hours}{\value{hours}-12}%
       \thehours:\theminutes \ p.m.                
     }
     {
       \thehours:\theminutes \ a.m.                
     } 
}
\def\putdate{{\tt Compiled on \the\month-\the\day-\the\year \ at\printtime} \\}
\begin{document} 

\title{Approximation pathologies for certain continued fractions}
 \author{Avraham Bourla\\
   Department of Mathematics\\
  American University, Washington DC\\
   \texttt{bourla@american.edu}}
 \date{\today}
 \maketitle
\begin{abstract}
\noindent We will provide a family of continued fractions for which there is no correspondence between dynamic and approximation pairs, leading to an anomaly in their corresponding  Spaces of Jager Pairs.
\end{abstract}

\section{Introduction and preliminaries}{} 

Associated with every continued fraction expansion is its Space of Jager Pairs, used in assessing the approximation quality for its convergents \cite{JK}. In this paper, we will examine the one parameter family of continued fractions
\[ [a_1,a_2,...]_k := \frac{k}{k+a_1+\frac{k}{k+a_2+ ...}}, \ttab a_n \in \IZ_{\ge 0}, \ttab 0 < k \in \IR,\] 
introduced in \cite{HM1}, was also studied extensively in \cite{Avi1, Avi2}. Letting $x_0 :=[a_1,a_2,...]_k$ leads to the definitions of the convergents $\frac{p_0}{q_0} = \frac{0}{1}$ and 
\[ \frac{p_n}{q_n} := [a_1,a_2,...,a_n]_k = \frac{k}{k+a_1+\frac{k}{k+a_2+ ... + \frac{k}{k+a_n}}}, \tab n\ge1.\]
Define the future and past of $x_0$ at time $n \ge 1$ to be $x_n := [a_{n+1},a_{n+2},...]_k \in (0,1)$ and $y_n := -k-a_n - [a_{n-1},a_{n-2},...,a_1]_k \in (-\infty, -k]$ (we take $y_1 = -k-a_1$) and call the pairs $(x_n,y_n)$ the dynamic pairs of $x_0$ at time $n$. The pair of consecutive terms $(\theta_{n-1}(x_0),\theta_n(x_0))$ in the sequence of approximation coefficients $\{\theta_n\}_0^\infty := \left\{\abs{x_0 - \frac{p_n}{q_n}}q_n^2\right\}_0^\infty$ is called the approximation pair of $x_0$ at time $n$. There is a correspondence between these pairs, which was established by Haas and Molnar in \cite[Theorem 4]{HM1}, who proved that $(\theta_{n-1},\theta_n)$ is the image of $(x_n,y_n)$ under the bijective map 
\begin{equation}\label{Psi}
\Psi_k(x,y) := \left(\frac{1}{x-y}, -\frac{x{y}}{k(x-y)}\right). 
\end{equation}
We will prove that there is no such bijection when $0<k<1$ and, consequently, that the formula for the Space of Jager Pairs for these continued fraction expansions does not follow suit with the $k\ge 1$ cases.

\section{The space of Jager Pairs when k is less than one}{}

\noindent For all $k \in (0,\infty)$ and $a \in \IZ_{\ge 0}$ define the region $P_{(k,a)} := (0,1) \times (-k-a-1, -k-a]$ and $P_{(k,a)}^\# := \Psi_k(P_{(k,a)})$. The continuity of the map $\Psi$ provides us with the partition $\Gamma_k = \Psi_k(\bigcup{P_a}) = \bigcup{\Psi(P_a)} = \bigcup{P_a^\#}$. To find $P_a^\#$, we will use the following propositions:

\begin{proposition}{\cite[Proposition 3.1]{Avi1}}\label{P_a}\\
If $\Psi$ is injective on $P_{(k,a)}$, then $P_{(k,a)}^\#$ is the quadrangle with vertices $\left(\frac{1}{k+a},0\right), \tab \left(\frac{1}{k+a+1},\frac{k+a}{k(k+a+1)}\right),\\ \tab \left(\frac{1}{k+a+2},\frac{k+a+1}{k(k+a+2)}\right)$ and $\left(\frac{1}{k+a+2},0\right)$. 
\end{proposition}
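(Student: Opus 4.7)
The plan is to exploit the assumed injectivity of $\Psi_k$ on the rectangle $P_{(k,a)}=(0,1)\times(-k-a-1,-k-a]$ by tracing the image of its boundary. Since $\Psi_k$ extends continuously to the closed rectangle and is injective there, $P_{(k,a)}^\#$ is a topological disk whose boundary is the image of the four edges; it therefore suffices to compute $\Psi_k$ along each edge and at each corner.

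First I would evaluate $\Psi_k$ directly at the four corners $(0,-k-a)$, $(1,-k-a)$, $(1,-k-a-1)$, $(0,-k-a-1)$. The corners with $x=0$ map to points on the horizontal axis (since $xy=0$ there), while the corners with $x=1$ map to points with strictly positive second coordinate. A routine substitution into (\ref{Psi}) then produces the four claimed vertices.

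Next I would parametrize each edge. On a vertical edge $x=c\in\{0,1\}$, setting $u=1/(c-y)$ gives $y=c-1/u$, and substituting into $v=-xy/(k(x-y))$ yields $v$ as an affine function of $u$, so the edge maps to a line segment. On a horizontal edge $y=-k-a-i$ with $i\in\{0,1\}$, the same substitution produces $v=\tfrac{k+a+i}{k}\bigl(1-(k+a+i)u\bigr)$, again affine in $u$, so this edge likewise maps to a line segment. In each case the endpoints coincide with the pair of corner images adjacent to the edge, so the four segments assemble into the claimed quadrangle.

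The only subtle point is verifying that $P_{(k,a)}^\#$ is the closed region bounded by this polygon and not merely its boundary, which I would settle by invoking invariance of domain: the continuous injection of the open rectangle lands in the interior of the quadrangle, and the boundary computation above then forces equality. The remainder is mechanical substitution, so no real obstacle arises beyond careful bookkeeping of which corner pairs anchor which edge.
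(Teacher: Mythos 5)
Your overall strategy---evaluate $\Psi_k$ at the four corners, check that each edge maps to a line segment because $v$ is affine in $u$ there, and use invariance of domain to fill in the interior---is sound, and it is exactly the method this paper itself uses to compute $P_0^\#$ in Theorem \ref{P_0_k<1} (the paper does not reprove Proposition \ref{P_a}; it imports it from \cite{Avi1}). Your edge parametrizations are correct. But the step you wave off as ``routine substitution'' is precisely where the argument and the stated proposition part ways. The corner $(0,-k-a-1)$ has $x-y=k+a+1$ and $xy=0$, so
\[
\Psi_k(0,-k-a-1)=\left(\frac{1}{k+a+1},\,0\right),
\]
not $\left(\frac{1}{k+a+2},0\right)$. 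The four corner images are therefore $\left(\frac{1}{k+a},0\right)$, $\left(\frac{1}{k+a+1},\frac{k+a}{k(k+a+1)}\right)$, $\left(\frac{1}{k+a+2},\frac{k+a+1}{k(k+a+2)}\right)$ and $\left(\frac{1}{k+a+1},0\right)$, and the quadrangle your boundary trace actually assembles has $\left(\frac{1}{k+a+1},0\right)$ as its fourth vertex, with the image of the edge $y=-k-a-1$ being the slanted segment $v=\frac{k+a+1}{k}\bigl(1-(k+a+1)u\bigr)$ joining it to the third vertex. The quadrangle named in the statement is strictly larger. Concretely, for $k=1$, $a=0$ the stated quadrangle contains $(u,v)=(0.4,0.1)$, but the preimage equations $x-y=2.5$, $xy=-0.25$ have no solution with $x\in[0,1]$ and $y\in[-2,-1]$, so that point is not in $P_{(1,0)}^\#$. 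Your proof, carried out honestly, thus establishes a different (and correct) quadrangle and exposes what appears to be a typo in the transcription of \cite[Proposition 3.1]{Avi1}---a reading corroborated by items 2 and 3 of the paper's proof of Theorem \ref{P_0_k<1}, where the corner $(0,-k-1)$ is explicitly sent to $\left(\frac{1}{k+1},0\right)$. Claiming that the substitution ``produces the four claimed vertices'' is simply false for the fourth one; you must either flag the discrepancy or correct the target.

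A secondary gap: injectivity of $\Psi_k$ on the \emph{closed} rectangle, which you need for the Jordan-curve/invariance-of-domain step, does not follow from the hypothesis, which only gives injectivity on the half-open rectangle $P_{(k,a)}$. It is true---the closed rectangle lies in $\{x+y\le 0\}$ precisely when $k+a\ge 1$, which is when the hypothesis holds, so the argument of Proposition \ref{Psi_fold} applies---but it should be derived from that, not asserted.
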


\begin{proposition}\label{Psi_fold}
$\Psi$ is injective on the region $\{(x,y) \in \IR^2 : x + y < 0\}$ and is invariant under the reflection about the line $x+y=0$.
\end{proposition}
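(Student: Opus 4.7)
The plan is to handle the two claims separately, with the reflection identity giving the key algebraic relation that drives the injectivity argument.

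For the symmetry, I would compute $\Psi_k(-y,-x)$ directly from the definition. The first coordinate becomes $\frac{1}{(-y)-(-x)} = \frac{1}{x-y}$, and the second becomes $-\frac{(-y)(-x)}{k((-y)-(-x))} = -\frac{xy}{k(x-y)}$, so $\Psi_k(-y,-x) = \Psi_k(x,y)$. Since $(x,y) \mapsto (-y,-x)$ is precisely the reflection about the line $x+y=0$, this establishes the invariance.

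For injectivity on the half-plane $\{x+y<0\}$, suppose $\Psi_k(x_1,y_1) = \Psi_k(x_2,y_2)$. Equating first coordinates gives $x_1-y_1 = x_2-y_2$, and then equating second coordinates and multiplying through by the common value $k(x_1-y_1)$ gives $x_1 y_1 = x_2 y_2$. Rewriting, the pairs $(x_i, -y_i)$ satisfy
\begin{equation*}
x_i + (-y_i) = x_i - y_i \aand x_i \cdot (-y_i) = -x_i y_i,
\end{equation*}
so $x_1, -y_1$ and $x_2, -y_2$ are the two roots (counted with multiplicity) of the same monic quadratic. Hence $\{x_1, -y_1\} = \{x_2, -y_2\}$ as unordered pairs, leaving exactly two possibilities: either $(x_2,y_2) = (x_1,y_1)$, or $(x_2,y_2) = (-y_1,-x_1)$.

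The second alternative is where the hypothesis $x+y<0$ enters. In that case $x_2 + y_2 = -(x_1+y_1)$, so if both points satisfy $x+y<0$ we would need $x_1+y_1<0$ and $-(x_1+y_1)<0$ simultaneously, which is impossible. Hence only the first alternative survives, proving $\Psi_k$ is injective on $\{x+y<0\}$. The argument is essentially Vieta's formulas combined with the reflection symmetry computed at the outset, so I do not anticipate a serious obstacle; the one point worth double-checking is that $x-y \neq 0$ on the relevant region (which follows since $y\leq -k < 0 < x$ for the pairs we care about), so that division in $\Psi_k$ is well-defined throughout.
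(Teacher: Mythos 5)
Your proof is correct and follows essentially the same route as the paper: both derive $x_1-y_1=x_2-y_2$ and $x_1y_1=x_2y_2$ from the equality of images and then use the sign condition $x+y<0$ to rule out the reflected solution (the paper phrases this via $(x+y)^2=(x-y)^2+4xy$ rather than Vieta's formulas, but the content is identical). No issues to flag.
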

\begin{proof}
The equality $\Psi(x, y) = \big(\frac{1}{x-y}, -\frac{xy}{k(x-y)}\big) = \Psi(-y,-x)$ proves the first statement. Let $(x_1,y_1)$ and $(x_2,y_2)$ be points which are on or below the line $x + y = 0$, so that $x_1 + y_1 \le 0, \tab x_2 + y_2 \le 0$ and let $u_1,v_1,u_2,v_2 \in \IR$ be such that $(u_1,v_1) = \Psi(x_1,y_1)=  \Psi(x_2,y_2) = (u_2,v_2)$. Then the definition of $\Psi$ \eqref{Psi} implies that $\frac{1}{x_1-y_1} = u_1 = u_2 = \frac{1}{x_2-y_2}$, hence
\begin{equation}\label{x-y_G}
x_1 - y_1 = x_2 - y_2 \ne 0.
\end{equation}
Also 
\[-\dfrac{u_1}{k}x_1y_1 = v_1 = v_2 = -\dfrac{u_2}{k}x_2y_2  =  -\dfrac{u_1}{k}x_2y_2\]
and $u_1,u_2 > 0$ imply that  $x_1y_1 = x_2y_2$, so that
\[(x_1 +y_1)^2 = (x_1 - y_1)^2 + 4x_1y_1 = (x_2 - y_2)^2 + 4x_2y_2 = (x_2 + y_2)^2.\]
Since $x_1 + y_1 \le 0$ and $x_2 + y_2 \le 0$, this last equation proves $x_1 + y_1 = x_2 + y_2$, which in tandem with condition \eqref{x-y_G}, proves $x_1=x_2$ and $y_1 = y_2$, hence $\Psi$ is injective on or below the line $x+y=0$. 
\end{proof}

\noindent These propositions proves that when $k \ge 1$, the Space of Jager Pairs 
\[\Gamma_k := \{(\theta_{n-1}(x_0),\theta_n(x_0))\}_{x_0 \in (0,1)}\] 
is the convex quadrangle with vertices $\left(0,0\right), \tab \left(\frac{1}{k},0\right),\tab\left(\frac{1}{k+1},\frac{1}{k+1}\right)$ and $\left(0,\frac{1}{k+1}\right)$.

\begin{center}
\includegraphics[scale=.5]{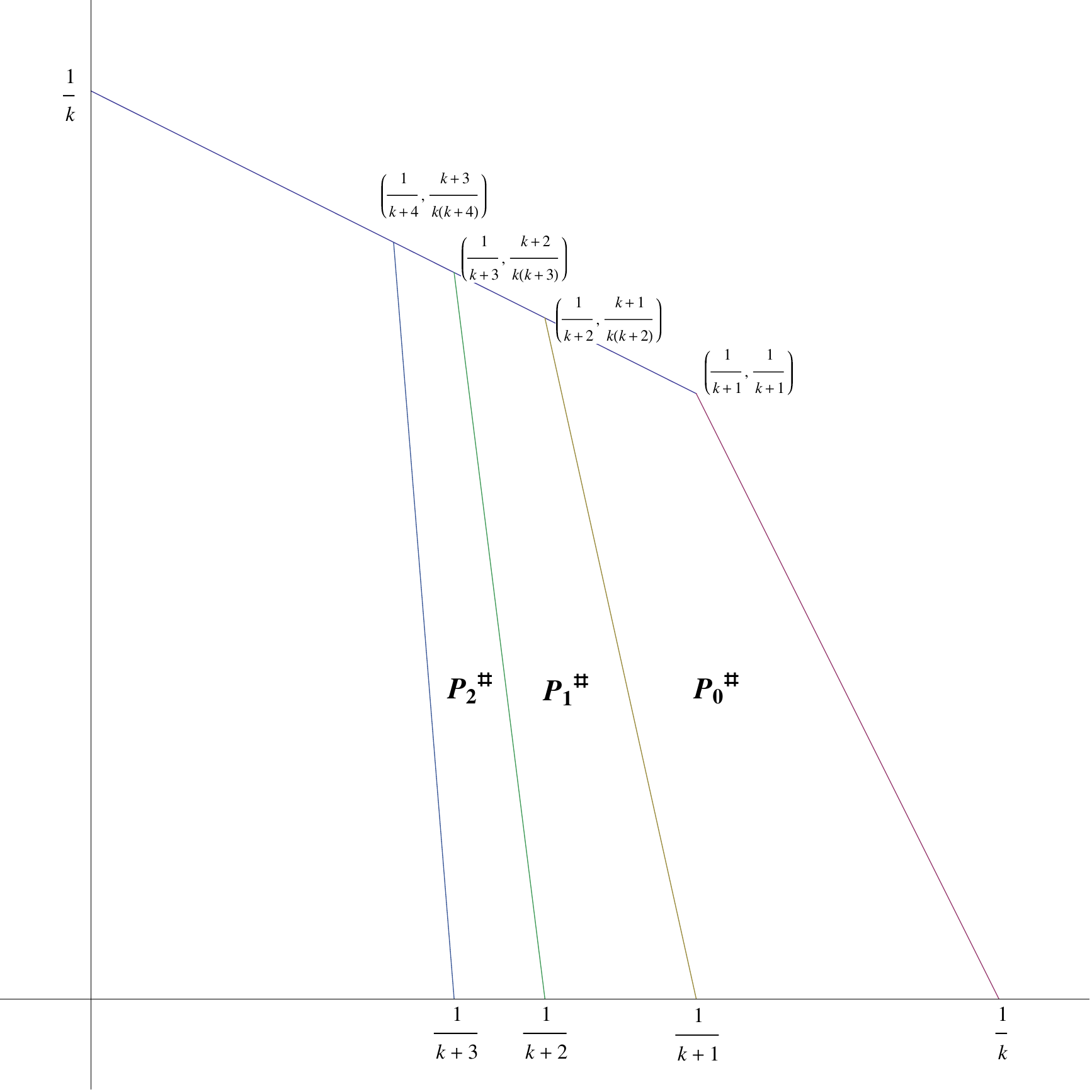}\\
$\Gamma_k \text{ when $k\ge 1$}$\\
\end{center}

\noindent When $0<k<1$, $\Psi$ is injective on $P_{(k,a)}$ precisely when $a>0$. In order to finish the characterization of the space of approximation coefficients for these cases, we prove:
\begin{theorem}\label{P_0_k<1}
When $0<k<1$, $P_0^\#$ is the intersection of the unbounded regions $u{k}+v < 1, \tab v > 0, \tab  (k+1)^2u+k{v} > k+1, \tab u+k{v} < 1$ and $4k{u}{v} \le 1$. 
\end{theorem}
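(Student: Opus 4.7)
The plan is to exploit the reflection symmetry of Proposition \ref{Psi_fold} to reduce the computation of $P_0^\#$ to the $\Psi$-image of a region on which $\Psi$ is injective. When $0<k<1$, the line $x+y=0$ meets $P_{(k,0)}=(0,1)\times(-k-1,-k]$ along the segment from $(k,-k)$ to $(1,-1)$ and splits it into
\[
R_1 = P_{(k,0)}\cap\{x+y\le 0\}, \qquad R_2 = P_{(k,0)}\cap\{x+y>0\}.
\]
Here $R_1$ is a pentagon with vertices $(0,-k),(k,-k),(1,-1),(1,-k-1),(0,-k-1)$, while $R_2$ is the open triangle with vertices $(k,-k),(1,-k),(1,-1)$.

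The first substantive step is the reduction $P_0^\# = \Psi(R_1)$. Writing $\sigma(x,y)=(-y,-x)$ for the reflection across $x+y=0$, Proposition \ref{Psi_fold} supplies both the identity $\Psi = \Psi\circ\sigma$ and injectivity of $\Psi$ on $\{x+y\le 0\}$. Parametrizing $R_2=\{(a,b):k<a<1,\ -a<b\le -k\}$ and applying $\sigma$ yields $\sigma(R_2)=\{(x,y):k\le x<1,\ -1<y<-k,\ x+y<0\}$. Since every such point automatically satisfies $0<x<1$, $-k-1<y\le -k$, and $x+y\le 0$, we obtain $\sigma(R_2)\subseteq R_1$. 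Consequently $\Psi(R_2)=\Psi(\sigma(R_2))\subseteq\Psi(R_1)$, and therefore $\Psi(P_{(k,0)})=\Psi(R_1)$.

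Because $\Psi$ restricts to a continuous injection on $R_1$, it carries the boundary of $R_1$ onto the boundary of $P_0^\#$. A direct substitution into formula \eqref{Psi} converts each of the five edges of $R_1$ into an arc of a line or hyperbola: the left edge $x=0$ maps to the segment $v=0$ from $(1/(k+1),0)$ to $(1/k,0)$; the top edge $y=-k$ (with $x\in(0,k]$) maps to the segment of $uk+v=1$ from $(1/k,0)$ to $(1/(2k),1/2)$; the diagonal $y=-x$ (with $x\in[k,1]$) maps to the hyperbolic arc $4kuv=1$ from $(1/(2k),1/2)$ to $(1/2,1/(2k))$; the right edge $x=1$ maps to a segment of $u+kv=1$; and the bottom edge $y=-k-1$ maps to a segment of $(k+1)^2 u+kv=k+1$.

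These five arcs bound $P_0^\#$, and plugging the image of any convenient interior point of $R_1$ into the five inequalities of the statement determines the side of each curve on which $P_0^\#$ lies, producing exactly the conditions claimed; the sole non-strict bound $4kuv\le 1$ records the fact that the diagonal edge $y=-x$ itself lies inside $R_1$. The main obstacle is the set-theoretic verification $\sigma(R_2)\subseteq R_1$, which is what rules out a ``genuinely new'' piece of the image contributed by $R_2$; once that inclusion is in hand, the theorem reduces to the routine edge-by-edge calculation of $\Psi$ on the pentagon $R_1$.
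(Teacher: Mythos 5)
Your proposal is correct and follows essentially the same route as the paper: split $P_{(k,0)}$ along the line $x+y=0$, use Proposition \ref{Psi_fold} to fold the upper triangle onto the lower pentagon (your explicit check that $\sigma(R_2)\subseteq R_1$ makes precise what the paper states as ``this reflection is also contained in $P_0$''), and then compute the $\Psi$-image of each of the five boundary arcs, which match the paper's five cases exactly. The only difference is cosmetic --- you determine the correct side of each bounding curve by testing an interior point rather than by the paper's appeal to $\Psi$ being a continuous bijection of interior to interior and boundary to boundary --- so no further commentary is needed.
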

\begin{proof}
Let $(u,v) := \Psi(x,-k-a)$. From the definition \eqref{Psi} of $\Psi$, we obtain
\begin{equation}\label{u=f(x,y)}
u = \dfrac{1}{x-y} = \dfrac{1}{x+k+a}
\end{equation}
and $v = -\frac{x{y}}{x-y}$ so that
\begin{equation}\label{v=f(u)}
v =  -\dfrac{x{y}}{k(x-y)} = \dfrac{u}{k}x(-y) = \dfrac{u}{k}\bigg(\dfrac{1}{u}- (a+k)\bigg)(a+k) = \dfrac{a+k}{k}\big(1-(a+k)u\big).
\end{equation}
After restricting $P_0$ to its part on or below $x+y=0$, we are left with the region whose boundary is the 5-gon with vertices $(k,-k), (1,-1), (1,-k), (1,-k-1),  (0,-k-1)$ and $(0,-k)$, which includes the part of its perimeter, which is the open line segment connecting the first vertex to the second vertex. We will use these formulas to evaluate the image of each open line segment in this 5-gon under $\Psi$.
\begin{enumerate}
\item When $y=-k$, we obtain that $u$ ranges between $\frac{1}{k}$ and $\frac{1}{2k}$ as $x$ ranges between $0$ and $k$. Plugging $a=0$ to formula \eqref{v=f(u)} yields that the line segment $(0,k) \times \{-k\}$ in the $x{y}$ plane maps to the open segment in the line $uk+v=1$ between the points $\left(\frac{1}{2k},\frac{1}{2}\right)$ and $\left(\frac{1}{k},0\right)$ on the $u{v}$ plane\\

\item  Plugging $x=a=0$ yields that the line segment $\{0\} \times (-k-1,-k)$  in the $x{y}$ plane maps to $\left(\frac{1}{k+1},\frac{1}{k}\right)\times\{0\}$ on the $u{v}$ plane. 

\item The line segment $(0,1) \times \{-(k+1)\}$  in the $x{y}$ plane maps to the open segment of the line $(k+1)^2u+k{v}=k+1$ between $\left(\frac{1}{k+1},0\right)$ and $\left(\frac{1}{k+2},\frac{k+1}{k(k+2)}\right)$ on the $u{v}$ plane. 

\item Plugging $x=1$ and $a=0$ yields that the line segment $\{1\} \times \left(-1,-(k+1)\right)$ in the $x{y}$ plane maps to the open segment of the line $u+k{v}=1$ between the points $\left(\frac{1}{k+2},\frac{k+1}{k(k+2)}\right)$ and $\left(\frac{1}{2},\frac{1}{2k}\right)$ on the $u{v}$ plane.

\item Finally, if $y=-x$ then $u = \frac{1}{x-y}= \frac{1}{2x}$ hence $x = \frac{1}{2u}$ and $v= -\frac{u}{k}x{y} = \frac{1}{4k{u}}$. Thus the line $x+y=0$  in the $x{y}$ plane is mapped under $\Psi$ to the hyperbola $4k{u}v=1$ in the $u{v}$ plane. The points $(1,-1)$ and $(k,-k)$ map to $\big(\frac{1}{2},\frac{1}{2k}\big)$ and $\big(\frac{1}{2k}, \frac{1}{2}\big)$ under $\Psi$, so that the open segment of the line $x+y=0$ from $(1,-1)$ to $(k,-k)$ maps to the open arc on this hyperbola from $\big(\frac{1}{2},\frac{1}{2k}\big)$ to $\big(\frac{1}{2k}, \frac{1}{2}\big)$. 
\end{enumerate}

\noindent Since $\Psi$ is a continuous bijection on and below the line $x+y=0$, it maps the interior and boundary of this 5-gon bijectively into the interior and boundary of the region in the $u{v}$ plane whose boundary we have just determined and which coincides with the hypothesis. Using proposition \ref{Psi_fold}, we know that the part of $P_0$ which is above the line $x+y=0$ has the same image under $\Psi$ as its reflection about this line. Since this reflection is also contained in $P_0$, we conclude that $P_0$ is mapped in its entirety onto this region, thus concluding the result.      
\end{proof}

\begin{corollary}
When $0<k<1$, the Space of Jager Pairs is the region in the $u{v}$ plane which is the union of the quadrangle vertices $\left(0,0\right), \tab \left(\frac{1}{k},0\right),\tab\left(\frac{1}{k+1},\frac{1}{k+1}\right)$ and $\left(0,\frac{1}{k+1}\right)$ and the part of the hyperbola $4k{u}{v}=1$ in the $u{v}$ plane between $u=\frac{1}{2}$ and $u=\frac{1}{2k}$.  
\end{corollary}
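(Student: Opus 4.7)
The plan is to compute $\Gamma_k = \bigcup_{a\ge 0} P_{(k,a)}^\#$ by splitting off $a=0$ and treating the rest collectively.

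For $a\ge 1$, the strip $P_{(k,a)}=(0,1)\times(-k-a-1,-k-a]$ lies in the half-plane $\{x+y<0\}$ since $x<1\le k+a$, so Proposition~\ref{Psi_fold} gives injectivity of $\Psi_k$ there and Proposition~\ref{P_a} produces an explicit quadrangle $P_{(k,a)}^\#$. Consecutive strips $P_{(k,a)}$ and $P_{(k,a+1)}$ share the line $y=-(k+a+1)$, and by continuity of $\Psi_k$ their images share the corresponding segment of the line $(k+a+1)^2 u+kv=k+a+1$ as a common edge, so the quadrangles abut cleanly. Running the same combinatorial argument as in the $k\ge 1$ case, but restricted now to $a\ge 1$, identifies $\bigcup_{a\ge 1}P_{(k,a)}^\#$ with the portion of the big quadrangle cut off on the right by the slanted edge of $P_{(k,1)}^\#$ along $(k+1)^2 u+kv=k+1$.

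For $a=0$ with $0<k<1$, Theorem~\ref{P_0_k<1} describes $P_{(k,0)}^\#$. The crucial geometric input is that the two bounding lines $ku+v=1$ and $u+kv=1$ are the tangent lines to the hyperbola $4kuv=1$ at $(1/(2k),1/2)$ and $(1/2,1/(2k))$ respectively: substituting $v=1-ku$ into $4kuv=1$ yields the perfect-square equation $(2ku-1)^2=0$, forcing tangency at $u=1/(2k)$, and the other tangency is symmetric. These tangent lines cross at the vertex $(1/(k+1),1/(k+1))$ of the big quadrangle, while the hyperbola arc between the two tangencies lies strictly above both tangent lines whenever $k<1$ (at $k=1$ the tangency points coincide and the arc degenerates to a point). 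Thus $P_{(k,0)}^\#$ consists of the ``right corner'' of the big quadrangle bounded by the two tangent lines, $v=0$, and the line $(k+1)^2u+kv=k+1$, together with the hyperbola arc as an additional one-dimensional feature on its boundary.

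Assembling the pieces along the shared edge $(k+1)^2u+kv=k+1$ yields the claim: the two-dimensional interior of $\Gamma_k$ equals the convex quadrangle with the stated vertices, and the only escape is the arc of $4kuv=1$ between $u=1/2$ and $u=1/(2k)$. The main obstacle is the tangency calculation via the identity $(1-2ku)^2\ge 0$, together with the careful verification --- drawn directly from the strict inequalities $ku+v<1$ and $u+kv<1$ in Theorem~\ref{P_0_k<1} --- that the interior of $P_{(k,0)}^\#$ is confined to the big quadrangle so that only the arc itself escapes.
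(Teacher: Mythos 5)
Your overall strategy is the one the paper implicitly intends (the corollary is stated without proof): split $\Gamma_k$ into $P_{(k,0)}^\#$, handled by Theorem \ref{P_0_k<1}, and $\bigcup_{a\ge1}P_{(k,a)}^\#$, handled by Proposition \ref{P_a}, and glue along $(k+1)^2u+kv=k+1$. Your tangency computation is correct and is genuinely the right key observation: substituting $v=1-ku$ into $4kuv=1$ gives $(2ku-1)^2=0$, so $ku+v=1$ and $u+kv=1$ are tangent to the hyperbola at $\left(\frac{1}{2k},\frac12\right)$ and $\left(\frac12,\frac{1}{2k}\right)$, they meet at $\left(\frac{1}{k+1},\frac{1}{k+1}\right)$, and by convexity of $v=\frac{1}{4ku}$ any point strictly below either tangent line automatically satisfies $4kuv<1$, so the hyperbola condition is redundant for the two\mbox{-}dimensional part and only the arc itself protrudes beyond the two lines.

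The gap is in the two containment claims you announce but do not carry out, and as stated they fail. Take the point $\left(\frac{1}{k+2},\frac{k+1}{k(k+2)}\right)$: it is simultaneously a vertex of $P_{(k,1)}^\#$ (Proposition \ref{P_a} with $a=1$) and a boundary corner of $P_{(k,0)}^\#$ (items 3 and 4 of the proof of Theorem \ref{P_0_k<1}), and nearby interior points of both pieces genuinely belong to $\Gamma_k$. Its second coordinate satisfies $\frac{k+1}{k(k+2)}>\frac{1}{k+1}$ (equivalent to $(k+1)^2>k(k+2)$), so it lies strictly above the top edge $v=\frac{1}{k+1}$ of the convex quadrangle on the four listed vertices, and it is not on the arc since $4kuv=\frac{4(k+1)}{(k+2)^2}<1$ there. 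More structurally, the top vertices $\left(\frac{1}{k+a+1},\frac{k+a}{k(k+a+1)}\right)$ of the quadrangles $P_{(k,a)}^\#$ all lie on the line $u+kv=1$, so $\bigcup_{a\ge1}P_{(k,a)}^\#$ is bounded above by that line, which rises to $\left(0,\frac{1}{k}\right)$ rather than to $\left(0,\frac{1}{k+1}\right)$; it is not ``the portion of the big quadrangle cut off by $(k+1)^2u+kv=k+1$.'' So your assembly does not establish that ``the only escape is the arc'': either the quadrangle must be reinterpreted with fourth vertex $\left(0,\frac{1}{k}\right)$ and upper-left edge along $u+kv=1$ (in which case your containment argument goes through using exactly the tangent-line bounds you derived), or the asserted confinement is false. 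You need to actually compute the upper envelope of the union rather than appeal to ``the same combinatorial argument as in the $k\ge1$ case,'' since that is precisely where the discrepancy lives.
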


\begin{center}
\includegraphics[scale=.5]{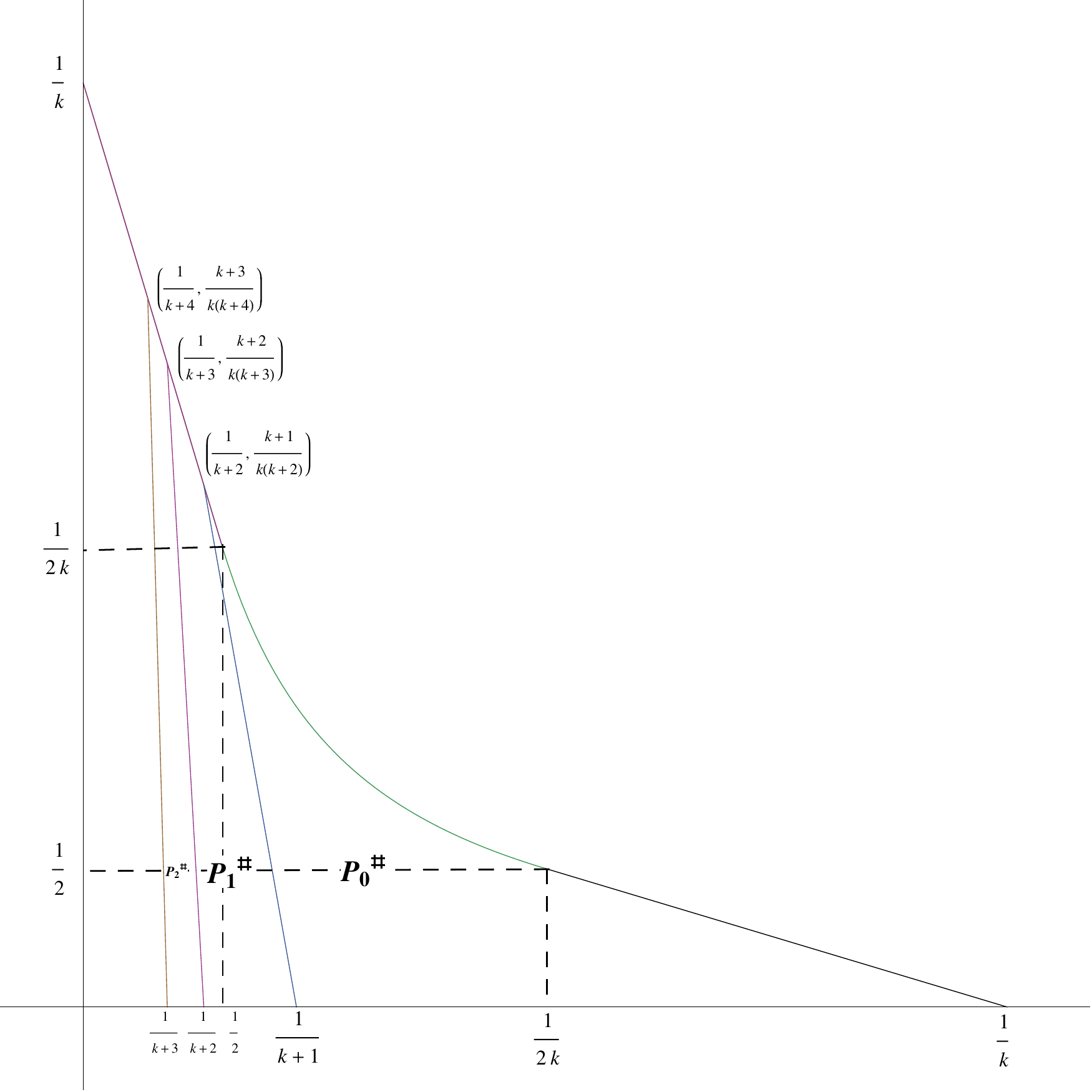}\\
$\Gamma_k \text{ when $0 < k < 1$}$
\end{center}

\end{document}